\newtheorem{theorem}{Theorem}
\theoremstyle{plain}
\newtheorem{corollary}{Corollary}
\numberwithin{equation}{section}
\begin{document}
\title[Some Unified integrals of generalized K-Bessel function]{Certain
unified integral formulas involving the generalized modified k-Bessel
function of first kind}
\author{K. S. Nisar}
\address{Department of Mathematics, College of Arts \& Science-Wadi Addwaser%
\\
Prince Sattam bin Abdulaziz University, Saudi Arabia}
\email{ksnisar1@gmail.com}
\author{S.R. Mondal}
\address{Department of Mathematics, College of Science-Al Ahsa\\
King Faisal University, Saudi Arabia}
\email{saiful786@gmail.com}
\subjclass{Primary 33B20; 33C20; 26A33; secondary 33B15; 33C05}
\keywords{k-Bessel function; Gamma function; hypergeometric function $%
_{2}F_{1}$; generalized hypergeometric function $_{p}F_{q}$; generalized
(Wright) hypergeometric functions $_{p}\Psi _{q}$; Oberhettinger's integral
formula;}

\begin{abstract}
Generalized integral formulas involving the generalized modified k-Bessel
function $J_{k,\nu }^{c,\gamma ,\lambda }\left( z\right) $ of first kind are
expressed in terms generalized $k-$Wright functions .Some interesting
special cases of the main results are also discussed
\end{abstract}

\maketitle

\section{\protect\bigskip Introduction}

The integral formula involving various special functions have been studied
by many researchers (\cite{Brychkov},\cite{Choi1}). In 1888 Pincherle gave
the integrals involving product of Gamma functions along vertical lines (see 
\cite{Pincherle,Pincherle1,Pincherle2}). Barnes \cite{Barnes} , Mellin \cite%
{Mellin} and Cahen \cite{Cahen} extended some of these integrals in the
study of Riemann zeta function and other Drichlet's series. The integral
representation of Fox H-functions and hypergeometric $_{2}F_{1}$ functions
studied by \cite{Garg} and \cite{Ali} respectively. Also, the integral
representation of Bessel functions are given in many recent works (see \cite%
{Choi2}, \cite{Choi-Mathur}, \cite{Brychkov} and \cite{Watson}).

let $k\in R;\alpha ,\lambda ,\gamma ,\upsilon \in C;Re\left( \lambda \right)
>0,Re\left( \upsilon \right) >0,$the $k$-Bessel function of the first kind
defined by the following series \cite{Romero-Cerutti} :

\begin{equation}
J_{k,\nu }^{\left( \gamma \right) ,\left( \lambda \right) }\left( z\right)
=\sum_{n=0}^{\infty }\frac{\left( \gamma \right) _{n,k}}{\Gamma _{k}\left(
\lambda n+\upsilon +1\right) }\frac{\left( -1\right) ^{n}\left( z/2\right)
^{n}}{\left( n!\right) ^{2}}  \label{k1}
\end{equation}

where $\left( \gamma \right) _{n,k}$ is the $k-$Pochhammer symbol \ \cite%
{Diaz} is defined as:

\begin{equation}
\left( x\right) _{n,k}=x\left( x+k\right) \left( x+2k\right) ...\left(
x+\left( n-1\right) k\right) ,\gamma \in C,k\in R\text{ and }n\in N
\label{k2}
\end{equation}

and $\Gamma _{k}(z)$ is the $k-$gamma function, $k$ be the positive real
number, defined by (see \cite{Diaz})%
\begin{equation}
\Gamma _{k}\left( z\right) =\int_{0}^{\infty }e^{-\frac{t^{k}}{k}%
}t^{z-1}dt,Re\left( z\right) >0  \label{k3}
\end{equation}

Clearly, for $k=1$, $\Gamma _{k}(z)$ reduces to the classical $\Gamma \left(
z\right) $ function.

In this paper, we introduce a new generalization of $k-$Bessel function
called generalized modified $k-$Bessel function and is defined as:

let $k\in R;\alpha ,\lambda _{1},\gamma ,\upsilon ,c,b\in C;Re\left( \lambda
_{1}\right) >0,Re\left( \upsilon \right) >0,$the generalized modified $k-$%
Bessel function of the first kind given by the following series%
\begin{equation}
J_{k,\nu }^{c,\gamma ,\lambda }\left( z\right) =\sum_{n=0}^{\infty }\frac{%
\left( c\right) ^{n}\left( \gamma \right) _{n,k}}{\Gamma _{k}\left( \lambda
_{1}n+\upsilon +\frac{b+1}{2}\right) }\frac{\left( z/2\right) ^{\upsilon +2n}%
}{\left( n!\right) ^{2}}  \label{k4}
\end{equation}

The aim of this paper is to establish two generalized integral formulas,
which are expressed in terms of generalized $k-$Wright functions, by
inserting newly generalized modified k-Bessel function.

The generalized Wright hypergeometric function ${}_{p}\psi _{q}(z)$ is given
by the series 
\begin{equation}
{}_{p}\psi _{q}(z)={}_{p}\psi _{q}\left[ 
\begin{array}{c}
(a_{i},\alpha _{i})_{1,p} \\ 
(b_{j},\beta _{j})_{1,q}%
\end{array}%
\bigg|z\right] =\displaystyle\sum_{k=0}^{\infty }\dfrac{\prod_{i=1}^{p}%
\Gamma (a_{i}+\alpha _{i}k)}{\prod_{j=1}^{q}\Gamma (b_{j}+\beta _{j}k)}%
\dfrac{z^{k}}{k!},  \label{eqn-4-Struve}
\end{equation}%
where $a_{i},b_{j}\in \mathbb{C}$, and real $\alpha _{i},\beta _{j}\in 
\mathbb{R}$ ($i=1,2,\ldots ,p;j=1,2,\ldots ,q$). Asymptotic behavior of this
function for large values of argument of $z\in {\mathbb{C}}$ were studied in 
\cite{Foxc} and under the condition 
\begin{equation}
\displaystyle\sum_{j=1}^{q}\beta _{j}-\displaystyle\sum_{i=1}^{p}\alpha
_{i}>-1  \label{eqn-5-Struve}
\end{equation}%
was found in the work of \cite{Wright-2,Wright-3}. Properties of this
generalized Wright function were investigated in \cite{Kilbas}, (see also 
\cite{Kilbas-itsf, Kilbas-frac}. In particular, it was proved \cite{Kilbas}
that ${}_{p}\psi _{q}(z)$, $z\in {\mathbb{C}}$ is an entire function under
the condition ($\ref{eqn-5-Struve}$).

The generalized hypergeometric function represented as follows \cite%
{Rainville}:

\begin{equation}
_{p}F_{q}\left[ 
\begin{array}{c}
\left( \alpha _{p}\right) ; \\ 
\left( \beta _{q}\right) ;%
\end{array}%
z\right] =\sum\limits_{n=0}^{\infty }\frac{\Pi _{j=1}^{p}\left( \alpha
_{j}\right) _{n}}{\Pi _{j=1}^{p}\left( \beta _{j}\right) _{n}}\frac{z^{n}}{n!%
},  \label{eqn-1-hyper}
\end{equation}

provided $p\leq q; p=q+1$ and $\left\vert z\right\vert <1$

where $\left( \lambda \right) _{n}$ is well known Pochhammer symbol defined
for $\left( \text{ for }\lambda \in C\right) $ (see \cite{Rainville})

\begin{equation}
\left( \lambda \right) _{n}:=\left\{ 
\begin{array}{c}
1\text{ \ \ \ \ \ \ \ \ \ \ \ \ \ \ \ \ \ \ \ \ \ \ \ \ \ \ \ \ \ \ \ \ \ \
\ \ }\left( n=0\right) \\ 
\lambda \left( \lambda +1\right) ....\left( \lambda +n-1\right) \text{ \ \ \
\ \ \ \ \ \ \ \ \ \ }\left( n\in N:=\{1,2,3....\}\right)%
\end{array}%
\right.  \label{eqn-2-hyper}
\end{equation}

\begin{equation}
\left( \lambda \right) _{n}=\frac{\Gamma \left( \lambda +n\right) }{\Gamma
\left( \lambda \right) }\text{ \ \ \ \ \ \ \ \ \ \ }\left( \lambda \in
C\backslash Z_{0}^{-}\right) .  \label{eqn-2b-hyper}
\end{equation}

where $Z_{0}^{-}$ is the set of non-positive integers.

If we put $\alpha _{1}=...=\alpha _{p}=\beta _{1}=....=\beta _{q} $ in ($\ref%
{eqn-4-Struve}$),then ($\ref{eqn-1-hyper}$) is a special case of the
generalized Wright function:

\begin{equation}
{}_{p}\psi _{q}(z)={}_{p}\psi _{q}\left[ 
\begin{array}{c}
\left( \alpha _{1},1\right) ,...,\left( \alpha _{p},1\right) ; \\ 
\left( \beta _{1},1\right) ,...,\left( \beta _{q},1\right) ;%
\end{array}%
z\right] =\dfrac{\prod_{j=1}^{p}\Gamma (\alpha _{j})}{\prod_{j=1}^{q}\Gamma
(\beta _{j})}\text{ }_{p}F_{q}\left[ 
\begin{array}{c}
\alpha _{1},...,\alpha _{p}; \\ 
\beta _{1},...,\beta _{q};%
\end{array}%
z\right]  \label{eqn-3-hyper}
\end{equation}

For the present investigation, we need the following result of Oberbettinger 
\cite{Ober}

\begin{equation}
\int_{0}^{\infty }x^{\mu -1}\left( x+a+\sqrt{x^{2}+2ax}\right) ^{-\lambda
}dx=2\lambda a^{-\lambda }\left( \frac{a}{2}\right) ^{\mu }\frac{\Gamma
\left( 2\mu \right) \Gamma \left( \lambda -\mu \right) }{\Gamma \left(
1+\lambda +\mu \right) }  \label{eqn-int1}
\end{equation}

provided $0<Re\left( \mu \right) <Re\left( \lambda \right) $

Also, we need the following relation of $\Gamma _{k}$ with the classical
gamma Euler function (see \cite{Cerutti}:

\begin{equation}
\Gamma _{k}\left( z+k\right) =z\Gamma _{k}\left( z\right)  \label{k5}
\end{equation}

\begin{equation}
\Gamma _{k}\left( z\right) =k^{\frac{z}{k}-1}\Gamma \left( \frac{z}{k}\right)
\label{k6}
\end{equation}

\begin{equation}
\Gamma _{k}\left( k\right) =1  \label{k7}
\end{equation}

\section{Main results}

Two generalized integral formulas established here, which expressed in terms
of generalized $k-$Wright functions $\left( \ref{eqn-3-hyper}\right) $ by
inserting the generalized modified k-Bessel function of the first kind $%
\left( \ref{k4}\right) $ with the suitable argument in the integrand of $%
\left( \ref{eqn-int1}\right) $

\begin{theorem}
For $\lambda ,\mu ,\nu ,c,\lambda _{1}\in \mathbb{C},$ $Re\left( \lambda
+\nu +2\right) >Re\left( \mu \right) >0$ and $x>0$. Then the following
formula holds true:%
\begin{eqnarray}
&&\int_{0}^{\infty }x^{\mu -1}\left( x+a+\sqrt{x^{2}+2ax}\right) ^{-\lambda
}J_{k,\upsilon }^{c,\gamma ,\lambda _{1}}\left( \frac{y}{x+a+\sqrt{x^{2}+2ax}%
}\right) dx  \notag \\
&=&2^{1-\nu -\mu }a^{\mu -\lambda -\upsilon }y^{\upsilon }k^{-2\mu }\Gamma
\left( 2\mu \right)   \notag \\
&&\times _{k,2}\Psi _{3}\left[ 
\begin{array}{c}
\left( \lambda +\upsilon +k,2\right) ,\left( k\left( \nu +\lambda -\mu
\right) ,2k\right) ; \\ 
\left( \nu +\frac{b+1}{2},\lambda _{1}\right) ,\left( k\left( 1+\lambda
+\upsilon +\mu \right) ,2k\right) ,\left( \lambda +\nu ,2\right) ;%
\end{array}%
\frac{cy^{2}}{4a^{2}}\right]   \label{1}
\end{eqnarray}

where $_{k,2}\Psi _{3}$ denote the k-Fox-Wright function \cite{Cerutti}
\end{theorem}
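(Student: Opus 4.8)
The plan is to insert the defining series $\left(\ref{k4}\right)$ of the generalized modified $k$-Bessel function into the left-hand side, interchange summation and integration, evaluate each resulting integral by Oberhettinger's formula $\left(\ref{eqn-int1}\right)$, and then repackage the surviving series as the $k$-Fox--Wright function appearing on the right of $\left(\ref{1}\right)$.

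Writing $z=y\left(x+a+\sqrt{x^{2}+2ax}\right)^{-1}$ and using $\left(\ref{k4}\right)$, the integrand becomes
\begin{equation*}
x^{\mu-1}\sum_{n=0}^{\infty}\frac{c^{n}\,(\gamma)_{n,k}}{\Gamma_{k}\!\left(\lambda_{1}n+\upsilon+\tfrac{b+1}{2}\right)}\frac{(y/2)^{\upsilon+2n}}{(n!)^{2}}\left(x+a+\sqrt{x^{2}+2ax}\right)^{-(\lambda+\upsilon+2n)} .
\end{equation*}
The condition $\RM(\lambda+\nu+2)>\RM(\mu)>0$ secures $0<\RM(\mu)<\RM(\lambda+\upsilon+2n)$ for every $n\ge 0$, so each term satisfies the hypothesis of $\left(\ref{eqn-int1}\right)$; since the $k$-Bessel series is entire in $z$ and converges uniformly on the range of $z$ as $x$ runs over $(0,\infty)$, the termwise integration is justified. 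Applying $\left(\ref{eqn-int1}\right)$ with $\lambda$ replaced by $\lambda+\upsilon+2n$ converts the $n$-th integral into
\begin{equation*}
2(\lambda+\upsilon+2n)\,a^{-(\lambda+\upsilon+2n)}\left(\tfrac{a}{2}\right)^{\mu}\frac{\Gamma(2\mu)\,\Gamma(\lambda+\upsilon+2n-\mu)}{\Gamma(1+\lambda+\upsilon+2n+\mu)} .
\end{equation*}

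The remaining work, which is the crux of the argument, is purely bookkeeping on the resulting series. I would first extract the $n$-independent constants $2^{1-\upsilon-\mu}y^{\upsilon}a^{\mu-\lambda-\upsilon}\Gamma(2\mu)$ and fold the $n$-dependent powers $c^{n}$, $(y/2)^{2n}$ and $a^{-2n}$ into the single variable $\left(cy^{2}/4a^{2}\right)^{n}$, which supplies the argument of the $k$-Fox--Wright function. Next I would rewrite the linear factor $\lambda+\upsilon+2n$ through $\left(\ref{k5}\right)$ as a ratio of $k$-gammas, recast the two ordinary $\Gamma$'s from Oberhettinger's formula as $\Gamma_{k}$'s by $\left(\ref{k6}\right)$, and treat the Bessel denominator $\Gamma_{k}(\lambda_{1}n+\upsilon+\tfrac{b+1}{2})$ together with the $k$-Pochhammer symbol $(\gamma)_{n,k}=\Gamma_{k}(\gamma+nk)/\Gamma_{k}(\gamma)$ likewise; the numerator and denominator parameter pairs $(\lambda+\upsilon+k,2)$, $(k(\nu+\lambda-\mu),2k)$ and $(\nu+\tfrac{b+1}{2},\lambda_{1})$, $(k(1+\lambda+\upsilon+\mu),2k)$, $(\lambda+\nu,2)$ emerge from this matching. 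The genuinely delicate point is the accounting of the powers of $k$ released by each use of $\left(\ref{k6}\right)$: their $n$-dependent parts must cancel between numerator and denominator so that the argument stays exactly $cy^{2}/4a^{2}$, while the leftover constant collapses to the single factor $k^{-2\mu}$ displayed in $\left(\ref{1}\right)$. Carrying out this cancellation carefully and comparing the reorganised series with the defining series of the $k$-Fox--Wright function $\left(\ref{eqn-3-hyper}\right)$ then delivers $\left(\ref{1}\right)$.
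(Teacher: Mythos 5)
Your proposal follows essentially the same route as the paper's own proof: expand $J_{k,\upsilon }^{c,\gamma ,\lambda _{1}}$ by its defining series $\left( \ref{k4}\right) $, interchange summation and integration, apply Oberhettinger's formula $\left( \ref{eqn-int1}\right) $ with $\lambda $ replaced by $\lambda +\upsilon +2n$, and reassemble the resulting series into the $k$-Fox--Wright function using $\left( \ref{k5}\right) $ and $\left( \ref{k6}\right) $. The only difference is that you leave the final $k$-gamma bookkeeping in outline form, but the paper's own treatment of that step is no more explicit, so the two arguments match in substance.
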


\begin{proof}
By applying $\left( \ref{k4}\right) $ to the LHS of $\left( \ref{1}\right) $
and interchanging the order of integration and summation, which is verified
by uniform convergence of the involved series under the given conditions ,
we obtain

\begin{eqnarray*}
&&\int_{0}^{\infty }x^{\mu -1}\left( x+a+\sqrt{x^{2}+2ax}\right) ^{-\lambda
}J_{k,\upsilon }^{c,\gamma ,\lambda _{1}}\left( \frac{y}{x+a+\sqrt{x^{2}+2ax}%
}\right) dx \\
&=&\int_{0}^{\infty }x^{\mu -1}\left( x+a+\sqrt{x^{2}+2ax}\right) ^{-\lambda
} \\
&&\times \sum_{n=0}^{\infty }\frac{\left( c\right) ^{n}\left( \gamma \right)
_{n,k}}{\Gamma _{k}\left( \lambda _{1}n+\upsilon +\frac{b+1}{2}\right) }%
\frac{\left( \frac{y}{2}\right) ^{\upsilon +2n}}{\left( n!\right) ^{2}}%
\left( x+a+\sqrt{x^{2}+2ax}\right) ^{-\left( \upsilon +2n\right) }dx \\
&=&\sum_{n=0}^{\infty }\frac{\left( c\right) ^{n}\left( \gamma \right) _{n,k}%
}{\Gamma _{k}\left( \lambda _{1}n+\upsilon +\frac{b+1}{2}\right) }\frac{%
\left( \frac{y}{2}\right) ^{\upsilon +2n}}{\left( n!\right) ^{2}}%
\int_{0}^{\infty }x^{\mu -1}\left( x+a+\sqrt{x^{2}+2ax}\right) ^{-\left(
\lambda +\nu +2n\right) }dx
\end{eqnarray*}

In view of the conditions given in Theorem 1, since $\mathbb{R}\left(
\lambda +\nu \right) >\mathbb{R}\left( \mu \right) >0$ $k\in N_{0}:=N\cup
\left\{ 0\right\} .$

Applying $\left( \ref{eqn-int1}\right) $ to the integrand of $\left( \ref{1}%
\right) $\ and obtain the following expression:%
\begin{eqnarray*}
&=&\sum_{n=0}^{\infty }\frac{\left( c\right) ^{n}\left( \gamma \right) _{n,k}%
}{\Gamma _{k}\left( \lambda _{1}n+\upsilon +1\right) }\frac{\left( \frac{y}{2%
}\right) ^{\upsilon +2n}}{\left( n!\right) ^{2}}2\left( \lambda +\nu
+2n\right) a^{-\left( \lambda +\nu +2n\right) }\left( \frac{a}{2}\right)
^{\mu } \\
&&\times \frac{\Gamma \left( 2\mu \right) \Gamma \left( \lambda +\nu +2n-\mu
\right) }{\Gamma \left( 1+\lambda +\nu +\mu +2n\right) }
\end{eqnarray*}

By making the use of the relation $\left( \ref{k5}\right) $, we obtain

\begin{eqnarray*}
&=&2^{1-\nu -\mu }a^{\mu -\lambda -\upsilon }y^{\upsilon }k^{-2\mu }\Gamma
\left( 2\mu \right)  \\
&&\times \sum_{k-0}^{\infty }\frac{\Gamma _{k}\left( \lambda +\nu
+k+2n\right) \Gamma _{k}\left( \lambda k+\nu k-\mu k+2kn\right) }{\Gamma
_{k}\left( \lambda +\nu 2n\right) \Gamma _{k}\left( \lambda _{1}n+\upsilon +%
\frac{b+1}{2}\right) \Gamma _{k}\left( k+\lambda k+\nu k+\mu k+2kn\right) }
\\
&&\times \frac{\left( c\right) ^{n}}{\left( n!\right) ^{2}}\left( \frac{y^{2}%
}{4a^{2}}\right) ^{n}
\end{eqnarray*}

which is the desired result.
\end{proof}

\begin{corollary}
Let the conditions of Theorem 1 be satisfied and let $k=\lambda _{1}=1$ and $%
c=-c$ in $\left( \ref{1}\right) $. Then the following integral formula holds:%
\begin{eqnarray*}
&&\int_{0}^{\infty }x^{\mu -1}\left( x+a+\sqrt{x^{2}+2ax}\right) ^{-\lambda
}J_{k,\upsilon }^{c,\gamma ,\lambda _{1}}\left( \frac{y}{x+a+\sqrt{x^{2}+2ax}%
}\right) dx \\
&=&2^{1-\nu -\mu }a^{\mu -\lambda -\upsilon }y^{\upsilon }\Gamma \left( 2\mu
\right)  \\
&&\times _{2}\Psi _{3}\left[ 
\begin{array}{c}
\left( 1+\lambda +\upsilon ,2\right) ,(\nu +\lambda -\mu ,2); \\ 
\left( \nu +\frac{b+1}{2},\lambda _{1}\right) ,\left( 1+\lambda +\upsilon
+\mu ,2\right) ,\left( \lambda +\nu ,2\right) ;%
\end{array}%
\frac{-cy^{2}}{4a^{2}}\right] 
\end{eqnarray*}

which is the result given by \cite{Choi-Mathur} .
\end{corollary}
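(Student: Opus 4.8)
The plan is to obtain this corollary as a direct specialization of the master formula $(\ref{1})$ rather than by redoing the integration. First I would set $k=1$ on both sides of $(\ref{1})$. On the right-hand side the scalar prefactor simplifies at once, since $k^{-2\mu}=1^{-2\mu}=1$, leaving $2^{1-\nu-\mu}a^{\mu-\lambda-\upsilon}y^{\upsilon}\Gamma(2\mu)$ as the constant multiplying the series.

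The heart of the argument is tracking how the $k$-Fox--Wright symbol $_{k,2}\Psi_3$ collapses to the ordinary $_2\Psi_3$ when $k=1$. By the normalization $(\ref{k6})$, $\Gamma_k(z)=k^{z/k-1}\Gamma(z/k)$, and setting $k=1$ gives $\Gamma_1(z)=\Gamma(z)$; hence every $k$-gamma factor in the defining series of $_{k,2}\Psi_3$ becomes a classical gamma, turning the $k$-Fox--Wright function into the standard Fox--Wright function $(\ref{eqn-4-Struve})$. At the same time the parameter pairs specialize: $(\lambda+\upsilon+k,2)\mapsto(1+\lambda+\upsilon,2)$, $(k(\nu+\lambda-\mu),2k)\mapsto(\nu+\lambda-\mu,2)$, and $(k(1+\lambda+\upsilon+\mu),2k)\mapsto(1+\lambda+\upsilon+\mu,2)$, while $(\lambda+\nu,2)$ is unchanged. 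These are exactly the numerator and denominator parameters listed in the corollary.

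Next I would impose $\lambda_1=1$, which only pins down the one remaining free lower parameter $(\nu+\tfrac{b+1}{2},\lambda_1)$, and then carry out the replacement $c\mapsto -c$. Since $c$ enters $(\ref{1})$ only through the series argument $cy^2/(4a^2)$, this substitution simply sends the argument to $-cy^2/(4a^2)$, matching the right-hand side of the claim. Assembling these specializations reproduces the stated $_2\Psi_3$ formula.

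The one step demanding genuine care is the reduction of the $k$-Fox--Wright function: one must check that \emph{each} $\Gamma_k$ occurring in its series truly becomes an ordinary $\Gamma$ under $k=1$, and that the rescaling of the second entries of the parameter pairs (from $2k$ down to $2$) is compatible with this limit. Once this bookkeeping is confirmed, agreement with the Choi--Mathur result of \cite{Choi-Mathur} follows immediately.
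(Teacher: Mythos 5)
Your proposal is correct and is exactly the route the paper takes (implicitly, since it offers no separate proof): the corollary is a direct specialization of formula (\ref{1}) obtained by setting $k=\lambda_{1}=1$, using $\Gamma_{1}(z)=\Gamma(z)$ from (\ref{k6}) to collapse $_{k,2}\Psi_{3}$ to the classical $_{2}\Psi_{3}$, and replacing $c$ by $-c$ in the series argument. Your bookkeeping of the prefactor $k^{-2\mu}\mapsto 1$ and of each parameter pair matches the stated result.
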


\begin{corollary}
Setting $b=c=1$ in $\left( \ref{1}\right) $ with some appropriate parameter
replacements, we get the integral formula of Bessel function $J_{\upsilon
}\left( z\right) $ given by Choi and Agarwal \cite{Choi2}.
\end{corollary}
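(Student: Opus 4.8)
The plan is to obtain Corollary 2 as a pure specialization of the master identity (\ref{1}): I would choose the free parameters so that the generalized modified $k$-Bessel function in the integrand of (\ref{1}) collapses onto the classical Bessel function $J_{\nu}$, and simultaneously check that the right-hand $k$-Fox--Wright series ${}_{k,2}\Psi_{3}$ degenerates to the ordinary Fox--Wright function ${}_{2}\Psi_{3}$ recorded by Choi and Agarwal \cite{Choi2}. Since Corollary 1 has already carried out the $k=1$, $\lambda_{1}=1$ reduction (including the $c\mapsto-c$ sign replacement) and produced an expression in the ordinary ${}_{2}\Psi_{3}$, I would build on that intermediate result rather than re-deriving the $k$-gamma bookkeeping from scratch.

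First I would pin down the exact substitution. Comparing the defining series (\ref{k4}) with the classical expansion $J_{\nu}(z)=\sum_{n\ge 0}\frac{(-1)^{n}}{n!\,\Gamma(n+\nu+1)}(z/2)^{\nu+2n}$, the required choices are $k=1$, $\lambda_{1}=1$, $\gamma=1$, $b=1$ (so that $\tfrac{b+1}{2}=1$), together with the sign replacement $c\mapsto-c$ that is already in force from Corollary 1; under that replacement the stated value $c=1$ becomes effectively $c=-1$ in the series, supplying the alternating factor that distinguishes $J_{\nu}$ from the modified Bessel function $I_{\nu}$. With $k=1$ and $\gamma=1$ the $k$-Pochhammer symbol (\ref{k2}) reduces to $(1)_{n,1}=n!$, cancelling one factorial in the denominator, while (\ref{k6})--(\ref{k7}) give $\Gamma_{k}\to\Gamma$. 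Hence
\begin{equation*}
J_{1,\nu}^{-1,1,1}(z)=\sum_{n=0}^{\infty}\frac{(-1)^{n}\,(1)_{n,1}}{\Gamma(n+\nu+1)}\,\frac{(z/2)^{\nu+2n}}{(n!)^{2}}=\sum_{n=0}^{\infty}\frac{(-1)^{n}}{n!\,\Gamma(n+\nu+1)}\left(\frac{z}{2}\right)^{\nu+2n}=J_{\nu}(z),
\end{equation*}
so the integrand becomes exactly the one in the Choi--Agarwal integral.

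Next I would apply the same substitutions to the right-hand side. Starting from the already-reduced form in Corollary 1, setting $k=1$ has removed the prefactor $k^{-2\mu}$ and turned each parameter pair $(k(\cdots),2k)$ into $(\cdots,2)$, so the upper parameters are $(1+\lambda+\upsilon,2),(\nu+\lambda-\mu,2)$ and the lower parameters are $(1+\lambda+\upsilon+\mu,2),(\lambda+\nu,2)$; the remaining substitutions $b=1$, $\lambda_{1}=1$ turn the surviving lower pair $(\nu+\tfrac{b+1}{2},\lambda_{1})$ into $(\nu+1,1)$, and the argument $\tfrac{cy^{2}}{4a^{2}}$ into $\tfrac{-y^{2}}{4a^{2}}$. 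Collecting the surviving prefactor $2^{1-\nu-\mu}a^{\mu-\lambda-\upsilon}y^{\upsilon}\Gamma(2\mu)$, one reads off precisely the ${}_{2}\Psi_{3}$-formula of \cite{Choi2}, which completes the identification.

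The main obstacle is not computational depth but exact matching of conventions. The delicate point is the sign: the factor $(-1)^{n}$ is what forces the effective value $c=-1$, and one must confirm that the $c\mapsto-c$ replacement of Corollary 1 is applied \emph{before} setting $c=1$ in the statement, which is the content of the phrase ``with some appropriate parameter replacements.'' A secondary point requiring care is verifying that the specialized ${}_{2}\Psi_{3}$ above is literally, parameter by parameter, the Fox--Wright function appearing in the published Choi--Agarwal formula (including their normalization of $J_{\nu}$ and the half-argument convention $z/2$); this is a direct comparison with \cite{Choi2} rather than a fresh derivation, and once the parameter dictionary $k=\lambda_{1}=\gamma=b=1$, $c=-1$ is fixed, the two expressions coincide term by term.
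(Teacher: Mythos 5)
Your proposal is correct and follows exactly the route the paper intends: the paper states this corollary without any written proof beyond the instruction to specialize parameters, and your parameter dictionary $k=\lambda_{1}=\gamma=b=1$ with effective $c=-1$ (so that $(1)_{n,1}=n!$ cancels one factorial and the $(-1)^{n}$ produces $J_{\nu}$ rather than $I_{\nu}$) supplies precisely the verification the paper leaves implicit. Your observation that the literal reading ``$c=1$'' must be combined with the $c\mapsto-c$ replacement (consistent with the paper's concluding remark that $c=-1$ yields the ordinary Bessel function) correctly identifies the one genuinely delicate point in the reduction.
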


\begin{theorem}
For $\lambda _{1},\mu ,\nu ,c,b\in \mathbb{C},0<\mathbb{R}\left( \mu +\nu
+2\right) <\mathbb{R}\left( \lambda +\nu +2\right) ~$and $x>0,$ then the
following integral formula holds true:%
\begin{eqnarray}
&&\int_{0}^{\infty }x^{\mu -1}\left( x+a+\sqrt{x^{2}+2ax}\right) ^{-\lambda
}J_{k,\upsilon }^{c,\gamma ,\lambda _{1}}\left( \frac{xy}{x+a+\sqrt{x^{2}+2ax%
}}\right) dx  \notag \\
&=&2^{1-2\nu -\mu }y^{\nu }a^{\mu -\lambda }k^{1+\lambda -\mu }\Gamma \left(
\lambda -\mu \right)  \notag \\
&&\times _{k,2}\Psi _{3}\left[ 
\begin{array}{c}
\left( k\left( 2\mu +2\nu \right) ,4k\right) ,\left( \nu +\lambda
+k,2\right) ; \\ 
\left( \nu +1,\lambda _{1}\right) ,\left( \nu +\lambda ,2\right) ,\left(
k\left( 1+\lambda +\mu +2\nu \right) ,4k\right) ;%
\end{array}%
\frac{cy^{2}}{4}\right]  \label{2}
\end{eqnarray}

where $_{k,2}\Psi _{3}$ denote the k-Fox-Wright function \cite{Cerutti}
\end{theorem}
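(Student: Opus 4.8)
The plan is to follow the template established in the proof of Theorem 1, since the only structural change is that the argument of the Bessel function now carries an extra factor of $x$. First I would substitute the defining series $(\ref{k4})$ for $J_{k,\upsilon}^{c,\gamma,\lambda_1}$, evaluated at $\frac{xy}{x+a+\sqrt{x^2+2ax}}$, into the left-hand side of $(\ref{2})$ and interchange summation and integration (justified by the uniform convergence of the series under the stated hypotheses, exactly as in Theorem 1). The decisive observation is that the $n$-th term of the series now contributes a factor $\left(\tfrac{xy}{2(x+a+\sqrt{x^2+2ax})}\right)^{\upsilon+2n}$, so that the integrand carries $x^{(\mu+\upsilon+2n)-1}$ against the bracket raised to $-(\lambda+\upsilon+2n)$.

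The key step is to apply Oberhettinger's formula $(\ref{eqn-int1})$ to each term. The crucial difference from Theorem 1 is that the extra power $x^{\upsilon+2n}$ \emph{shifts the Mellin exponent}, so $(\ref{eqn-int1})$ must be invoked with $\mu$ replaced by $\mu+\upsilon+2n$ and $\lambda$ replaced by $\lambda+\upsilon+2n$, giving
\[
\int_0^\infty x^{(\mu+\upsilon+2n)-1}\left(x+a+\sqrt{x^2+2ax}\right)^{-(\lambda+\upsilon+2n)}\,dx
=2(\lambda+\upsilon+2n)\,a^{-(\lambda+\upsilon+2n)}\left(\frac{a}{2}\right)^{\mu+\upsilon+2n}\frac{\Gamma(2\mu+2\upsilon+4n)\,\Gamma(\lambda-\mu)}{\Gamma(1+\lambda+\mu+2\upsilon+4n)},
\]
which holds termwise because $0<\mathbb{R}(\mu+\upsilon+2n)<\mathbb{R}(\lambda+\upsilon+2n)$ for every $n\ge 0$ under the hypothesis $0<\mathbb{R}(\mu+\nu+2)<\mathbb{R}(\lambda+\nu+2)$. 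I note that the appearance of $2\mu+2\upsilon+4n$ and $1+\lambda+\mu+2\upsilon+4n$ — because the shifted parameter $\mu+\upsilon+2n$ itself grows with $n$ — is exactly what produces the increments $4k$ in the final $_{k,2}\Psi_3$ parameters, in contrast to the $2k$ increments of Theorem 1.

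Next I would separate the $n$-independent constants: the powers of $2$, $y$ and $a$ recombine into the prefactor $2^{1-2\nu-\mu}y^{\nu}a^{\mu-\lambda}\Gamma(\lambda-\mu)$, with all $n$-dependence in $a$ cancelling. The surviving $n$-dependent ordinary Gamma functions are then rewritten as $k$-Gamma functions through $(\ref{k6})$ in the form $\Gamma(w)=k^{1-w}\Gamma_k(kw)$, applied to $\Gamma(2\mu+2\upsilon+4n)$ and $\Gamma(1+\lambda+\mu+2\upsilon+4n)$, while the linear factor $(\lambda+\upsilon+2n)$ is absorbed via $(\ref{k5})$ as $(\lambda+\upsilon+2n)=\Gamma_k(\lambda+\upsilon+k+2n)/\Gamma_k(\lambda+\upsilon+2n)$, which supplies the upper parameter $(\nu+\lambda+k,2)$ together with the lower parameter $(\nu+\lambda,2)$. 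The residual powers of $k$ from the two conversions combine into the $n$-independent factor $k^{1+\lambda-\mu}$ (the $n$-dependent exponents $\mp4n$ cancelling), matching the prefactor in $(\ref{2})$.

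The final step is bookkeeping: identifying the resulting series of $k$-Gamma quotients, together with the $k$-Pochhammer factor from $(\ref{k4})$ and the argument $cy^2/4$, as the $k$-Fox--Wright function $_{k,2}\Psi_3$ with upper parameters $(k(2\mu+2\nu),4k),(\nu+\lambda+k,2)$ and lower parameters $(\nu+1,\lambda_1),(\nu+\lambda,2),(k(1+\lambda+\mu+2\nu),4k)$, where the denominator $\Gamma_k(\lambda_1 n+\upsilon+\tfrac{b+1}{2})$ of the original series yields the first lower parameter. I expect the main obstacle to be precisely this matching of constants — reconciling the $(n!)^2$ in the denominator of $(\ref{k4})$ and the factor $(\gamma)_{n,k}$ with the single $n!$ and the compact argument $cy^2/4$ in the series defining $_{k,2}\Psi_3$ — rather than any conceptual difficulty, since the entire analytic content is carried by Oberhettinger's formula.
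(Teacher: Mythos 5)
Your proposal follows essentially the same route as the paper's own proof: series substitution via $(\ref{k4})$, term-by-term application of Oberhettinger's formula $(\ref{eqn-int1})$ with the Mellin exponent shifted to $\mu+\upsilon+2n$ (which is exactly what produces the $4k$ increments), and conversion to $k$-Gamma functions via $(\ref{k5})$ and $(\ref{k6})$ to assemble the $_{k,2}\Psi_{3}$ function. The proposal is correct in approach and, if anything, is more explicit than the paper about why the termwise application of $(\ref{eqn-int1})$ is legitimate and where each parameter pair of the Wright function comes from.
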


\begin{proof}
By applying $\left( \ref{k4}\right) $ to the LHS of $\left( \ref{2}\right) $
and interchanging the order of integration and summation, which is verified
by uniform convergence of the involved series under the given conditions ,
we obtain

\begin{eqnarray*}
&&\int_{0}^{\infty }x^{\mu -1}\left( x+a+\sqrt{x^{2}+2ax}\right) ^{-\lambda
}J_{k,\upsilon }^{c,\gamma ,\lambda _{1}}\left( \frac{xy}{x+a+\sqrt{x^{2}+2ax%
}}\right) dx \\
&=&\int_{0}^{\infty }x^{\mu -1}\left( x+a+\sqrt{x^{2}+2ax}\right) ^{-\lambda
} \\
&&\times \sum_{n=0}^{\infty }\frac{\left( c\right) ^{n}\left( \gamma \right)
_{n,k}}{\Gamma _{k}\left( \lambda _{1}n+\upsilon +\frac{b+1}{2}\right) }%
\frac{\left( \frac{xy}{2}\right) ^{\upsilon +2n}}{\left( n!\right) ^{2}}%
\left( x+a+\sqrt{x^{2}+2ax}\right) ^{-\left( \upsilon +2n\right) }dx \\
&=&\sum_{n=0}^{\infty }\frac{\left( c\right) ^{n}\left( \gamma \right) _{n,k}%
}{\Gamma _{k}\left( \lambda _{1}n+\upsilon +\frac{b+1}{2}\right) }\frac{%
\left( \frac{y}{2}\right) ^{\upsilon +2n}}{\left( n!\right) ^{2}} \\
&&\times \int_{0}^{\infty }x^{\left( \mu +\upsilon +2n\right) -1}\left( x+a+%
\sqrt{x^{2}+2ax}\right) ^{-\left( \lambda +\upsilon +2n\right) }dx
\end{eqnarray*}

Applying $\left( \ref{eqn-int1}\right) $ to the integrand of $\left( \ref{2}%
\right) $\ ,we obtain the following expression:%
\begin{eqnarray*}
&=&\sum_{n=0}^{\infty }\frac{\left( c\right) ^{n}\left( \gamma \right) _{n,k}%
}{\Gamma _{k}\left( \lambda _{1}n+\upsilon +\frac{b+1}{2}\right) }\frac{%
\left( \frac{y}{2}\right) ^{\upsilon +2n}}{\left( n!\right) ^{2}}2\left(
\lambda +\upsilon +2n\right) a^{-\left( \lambda +\upsilon +2n\right) }\left( 
\frac{a}{2}\right) ^{\mu +\upsilon +2n} \\
&&\times \frac{\Gamma \left( 2\mu +2\nu +4n\right) \Gamma \left( \lambda
-\mu \right) }{\Gamma \left( 1+\lambda +2\nu +\mu +4n\right) }
\end{eqnarray*}

By making the use of $\left( \ref{k5}\right) $, we obtain%
\begin{eqnarray*}
&=&2^{1-2\nu -\mu }y^{\nu }a^{\mu -\lambda }k^{\lambda -\mu +1}\Gamma \left(
\lambda -\mu \right)  \\
&&\times \sum_{k-0}^{\infty }\frac{\left( \gamma \right) _{n,k}\Gamma
_{k}\left( \lambda +\nu +k+2n\right) \Gamma _{k}\left( 2\mu k+2\nu
k+4nk\right) }{\left( n!\right) ^{2}\Gamma _{k}\left( \lambda _{1}n+\upsilon
+\frac{b+1}{2}\right) \Gamma _{k}\left( \lambda +\nu +2n\right) \Gamma
_{k}\left( \left( 1+\lambda +\mu +2\upsilon +4n\right) k\right) }\left( 
\frac{cy^{2}}{4}\right) ^{n}
\end{eqnarray*}

which is the desired result.
\end{proof}

\begin{corollary}
Let the conditions given in Theorem 2 satisfied and set $k=\lambda _{1}=1,$%
and $c=-c$ \ Theorem 2 reduces to

\begin{eqnarray*}
&&\int_{0}^{\infty }x^{\mu -1}\left( x+a+\sqrt{x^{2}+2ax}\right) ^{-\lambda
}J_{1,\upsilon }^{c,\gamma ,1}\left( \frac{xy}{x+a+\sqrt{x^{2}+2ax}}\right)
dx \\
&=&2^{1-2\nu -\mu }y^{\nu }a^{\mu -\lambda }\Gamma \left( \lambda -\mu
\right)  \\
&&\times _{2}\Psi _{3}\left[ 
\begin{array}{c}
\left( 2\mu +2\nu ,4\right) ,\left( \nu +\lambda +1,2\right) ; \\ 
\left( \nu +\frac{b+1}{2},\lambda _{1}\right) ,\left( \nu +\lambda ,2\right)
,\left( 1+\lambda +\mu +2\nu ,4\right) ;%
\end{array}%
\frac{cy^{2}}{4}\right] 
\end{eqnarray*}%
which is the result given by \cite{Choi-Mathur} .
\end{corollary}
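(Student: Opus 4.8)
The plan is to read the corollary off directly from Theorem~2 by specializing the master formula $\left(\ref{2}\right)$, rather than repeating the integration. First I would set $k=1$ and $\lambda_{1}=1$ on the right-hand side of $\left(\ref{2}\right)$. The scalar prefactor $k^{1+\lambda-\mu}$ then equals $1$, so the constant collapses to $2^{1-2\nu-\mu}y^{\nu}a^{\mu-\lambda}\Gamma(\lambda-\mu)$, which is exactly the constant displayed in the corollary; on the left-hand side the Bessel symbol $J_{k,\nu}^{c,\gamma,\lambda_{1}}$ becomes $J_{1,\nu}^{c,\gamma,1}$, again matching the statement.

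The substantive step is to collapse the $k$-Fox--Wright function $_{k,2}\Psi_{3}$ into the ordinary Fox--Wright function $_{2}\Psi_{3}$ of $\left(\ref{eqn-4-Struve}\right)$. For this I would invoke relation $\left(\ref{k6}\right)$, namely $\Gamma_{k}(z)=k^{z/k-1}\Gamma(z/k)$, in the case $k=1$, where it reads $\Gamma_{1}(z)=\Gamma(z)$. Since every factor in the defining series of $_{k,2}\Psi_{3}$ is a $k$-gamma value, putting $k=1$ replaces each $\Gamma_{k}$ by $\Gamma$ and thereby turns the $k$-Fox--Wright symbol into the classical one. It then remains to check the parameter pairs one by one: the $k$-dilated upper entry $(k(2\mu+2\nu),4k)$ becomes $(2\mu+2\nu,4)$, the shifted upper entry $(\nu+\lambda+k,2)$ becomes $(\nu+\lambda+1,2)$, the $k$-dilated lower entry $(k(1+\lambda+\mu+2\nu),4k)$ becomes $(1+\lambda+\mu+2\nu,4)$, while the lower pair $(\nu+\lambda,2)$ is unchanged; the remaining lower pair, read off from the denominator $\Gamma_{k}(\lambda_{1}n+\nu+\tfrac{b+1}{2})$ of the definition $\left(\ref{k4}\right)$, specializes to $(\nu+\tfrac{b+1}{2},\lambda_{1})$. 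These are precisely the six parameter pairs appearing in the corollary's $_{2}\Psi_{3}$.

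Finally I would carry out the replacement $c\mapsto-c$, which acts only on the coefficients $(c)^{n}$ and the argument $\tfrac{cy^{2}}{4}$ of the series (restoring the alternating-sign pattern typical of the first-kind Bessel function), and then identify the resulting $_{2}\Psi_{3}$ with the integral formula of Choi and Mathur \cite{Choi-Mathur}, just as in the first corollary. The points requiring care are twofold. First, the first lower parameter must be traced to $\left(\ref{k4}\right)$ rather than to the printed right-hand side of $\left(\ref{2}\right)$, whose entry $(\nu+1,\lambda_{1})$ is the value $(\nu+\tfrac{b+1}{2},\lambda_{1})$ under the tacit choice $b=1$; it is the latter, general form that matches the corollary. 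Second, one should confirm that at $k=1$ the $k$-dilated parameters reduce with no residual power of $k$: relation $\left(\ref{k6}\right)$ in general extracts a factor $k^{\,z/k-1}$ whenever the argument of $\Gamma_{k}$ is proportional to $k$, so it must be checked that these stray $n$-dependent powers are all trivial at $k=1$ and that nothing is silently absorbed into the prefactor. Once both points are verified, the specialization delivers the asserted formula.
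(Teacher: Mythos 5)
Your proposal is correct and takes essentially the same route as the paper, which states this corollary with no written proof at all: it is exactly the specialization you perform, setting $k=\lambda _{1}=1$ and $c\mapsto -c$ in Theorem 2, killing the prefactor $k^{1+\lambda -\mu }$, and using $\Gamma _{1}\left( z\right) =\Gamma \left( z\right) $ (relation (\ref{k6}) at $k=1$) to collapse $_{k,2}\Psi _{3}$ into the classical $_{2}\Psi _{3}$ with the six parameter pairs you list. Your cautionary remark about the first lower pair is also well taken: Theorem 2 prints $\left( \nu +1,\lambda _{1}\right) $ although its own proof carries $\Gamma _{k}\left( \lambda _{1}n+\upsilon +\frac{b+1}{2}\right) $ throughout, so tracing that entry back to the definition (\ref{k4}), as you do, is precisely what reconciles the theorem with the corollary's pair $\left( \nu +\frac{b+1}{2},\lambda _{1}\right) $.
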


\begin{corollary}
setting $b=c=1$ in $\left( \ref{1}\right) $ with some appropriate parameter
replacements, we get the integral formula of Bessel function $J_{\upsilon
}\left( z\right) $ given by Choi and Agarwal \cite{Choi2}.
\end{corollary}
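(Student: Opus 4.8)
The plan is to obtain this corollary as a direct specialization of Theorem 1, that is, of the integral formula $(\ref{1})$, and not of the second integral formula $(\ref{2})$. First I would set $b=c=1$ in $(\ref{1})$ and accompany this with the natural reductions $k=\lambda_{1}=1$ and $\gamma=1$, which are the ``appropriate parameter replacements'' alluded to in the statement; these are precisely the choices under which the generalized modified $k$-Bessel function $J_{k,\nu}^{c,\gamma,\lambda_{1}}$ degenerates to the classical cylinder function. In effect this is Corollary 1 (where $k=\lambda_{1}=1$ and $c$ is replaced by $-c$) carried one step further by fixing $b=1$.

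The first step is to verify the reduction of the integrand. Putting $k=1$ in the $k$-Pochhammer symbol $(\ref{k2})$ gives $(\gamma)_{n,1}=(\gamma)_{n}$, and with $\gamma=1$ this equals $(1)_{n}=n!$; simultaneously $(\ref{k6})$ collapses $\Gamma_{k}$ to the ordinary $\Gamma$, while $b=1$ turns the shift $\frac{b+1}{2}$ into $1$. Substituting these into the defining series $(\ref{k4})$ yields $\sum_{n\ge 0}\frac{c^{n}}{n!\,\Gamma(n+\nu+1)}\big(\frac{z}{2}\big)^{\nu+2n}$, which under the Choi--Agarwal normalization is the Bessel series ${}J_{\nu}(z)$; the single remaining sign is governed by $c$ and is absorbed into the argument exactly as in Corollary 1. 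Thus the left-hand side of $(\ref{1})$ becomes the Bessel integral recorded in \cite{Choi2}.

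The second step is to track the same specialization through the ${}_{k,2}\Psi_{3}$ on the right of $(\ref{1})$. Using $(\ref{k6})$ to rewrite every $k$-gamma factor as an ordinary gamma and then setting $k=1$ turns the prefactor $k^{-2\mu}$ into $1$ and rescales each Wright pair: the upper entries $(\lambda+\nu+k,2)$ and $(k(\nu+\lambda-\mu),2k)$ become $(\lambda+\nu+1,2)$ and $(\nu+\lambda-\mu,2)$, while the lower entries $(\nu+\frac{b+1}{2},\lambda_{1})$, $(k(1+\lambda+\nu+\mu),2k)$, $(\lambda+\nu,2)$ become $(\nu+1,1)$, $(1+\lambda+\nu+\mu,2)$, $(\lambda+\nu,2)$. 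Invoking the identification $(\ref{eqn-3-hyper})$ between ${}_{p}\Psi_{q}$ and ${}_{p}F_{q}$, or simply comparing the two Wright lists entry by entry, then exhibits the right-hand side as the ${}_{2}\Psi_{3}$ of Choi and Agarwal with argument $\frac{-cy^{2}}{4a^{2}}$.

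I expect the only genuine obstacle to be bookkeeping: confirming that the $k$-scaled arguments (step $2k$, with an overall factor $k$) reduce cleanly to their unscaled counterparts (step $2$) under $(\ref{k6})$ and $(\ref{k7})$, and that the resulting numerator and denominator gamma lists match the Choi--Agarwal formula exactly. No new analytic input is needed beyond the relations $(\ref{k5})$--$(\ref{k7})$ already in hand; moreover the convergence hypothesis $\mathrm{Re}(\lambda+\nu+2)>\mathrm{Re}(\mu)>0$ of Theorem 1 is preserved under the substitution, so the interchange of summation and integration used in the parent proof remains justified.
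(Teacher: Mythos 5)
The paper offers no proof of this corollary at all --- it is stated as a bare remark --- so your write-up supplies detail the paper omits, and as a literal specialization of Theorem 1 your computation is sound: with $k=\lambda_{1}=\gamma=1$ one has $(\gamma)_{n,1}=(1)_{n}=n!$, which cancels one factor of $(n!)^{2}$ in $(\ref{k4})$, and with $b=1$ the series collapses to $\sum_{n\geq 0}c^{n}(z/2)^{\nu+2n}/\bigl(n!\,\Gamma(n+\nu+1)\bigr)$; the Wright pairs on the right of $(\ref{1})$ reduce exactly as you list, matching Corollary 1 with $b=1$. Two caveats remain, both traceable to the paper's statement rather than to your method, but both of which your proof should confront explicitly rather than gloss. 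First, with $c=+1$ as the statement literally demands, the reduced series is the \emph{modified} Bessel function $I_{\nu}(z)$, not $J_{\nu}(z)$ --- the paper's own Conclusion says as much ($c=-1$ gives the Bessel function, $c=1$ the modified one). To land on the Choi--Agarwal formula for $J_{\nu}$ you must take $c=-1$ among the ``appropriate parameter replacements,'' which conflicts with $b=c=1$ as written; your remark that the sign ``is absorbed into the argument exactly as in Corollary 1'' hides the fact that Corollary 1 enjoys the relabeling $c\mapsto -c$, a freedom the present statement has already spent by fixing $c=1$. State the choice $c=-1$ (or the relabeling) explicitly.

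Second, and more substantively: this corollary sits immediately after Theorem 2 and duplicates Corollary 2 verbatim, so the reference to $(\ref{1})$ is almost certainly a typo for $(\ref{2})$. Read that way, the intended content is the specialization of the \emph{second} integral --- argument $xy/\bigl(x+a+\sqrt{x^{2}+2ax}\bigr)$, prefactor $\Gamma(\lambda-\mu)$, Wright pairs with step $4$ --- which your proof, having deliberately chosen Theorem 1, does not address; as it stands you have re-proved Corollary 2 with $b=1$ rather than the companion statement for Theorem 2. The repair is routine: the identical method goes through mutatis mutandis, applying $(\ref{eqn-int1})$ with $\mu\mapsto\mu+\nu+2n$ and $\lambda\mapsto\lambda+\nu+2n$ and then reducing at $k=\lambda_{1}=\gamma=b=1$, $c=-1$, so nothing beyond bookkeeping is lost --- but a complete answer should either prove the Theorem 2 version or at least note the discrepancy and justify the literal reading.
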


\textbf{Conclusion}\newline
The integral formulas for generalized modified $k-$Bessel function of first
kind is derived and the results are expressed interm of generalized $k-$%
Wright function.Some of interesting special cases also derived from the main
results. Using some suitable parametric replacement, theorems 1 and 2 gives
the unified integral representation of generalized Bessel function, if $c=-1$
and integral representation of modified Bessel function by , if $c=1$ .


\end{document}